\newtheorem{theorem}{Theorem}
\newtheorem{lemma}{Lemma}
\newtheorem{proposition}{Proposition}
\begin{document}

\title{Towards Green Shipping with Integrated Fleet Deployment and Bunker Management}

\author{Yanyan~Tong,
        Jianfeng~Mao~
        \thanks{Yanyan Tong and Jianfeng Mao are with the Division of Systems Engineering and Management, Nanyang Technological University, Singapore 639798, e-mail: ytong1@e.ntu.edu.sg, jfmao@ntu.edu.sg.}
        }

\maketitle

\begin{abstract}
The vast majority of world trade is carried by the sea shipping industry and petroleum is still the major energy source. To reduce greenhouse gas emissions, it is an urgent need for both industry and government to promote green shipping. To move shipping towards a more environmentally and financially sustainable future, we consider to develop an integrated fleet deployment and bunker management model in this paper. The fleet deployment refers to the decisions on which ships to operate on which route, at what speed and what cargos to be transported. The bunker management refers to the decision on which ships to bunker on which port and at what volume. A more realistic fuel consumption function with considering not only cruise speed but also freight tonnage on board is adopted in the proposed optimization model, which can further improve its effectiveness in practice. An efficient algorithm termed ``Most Promising Area Search'' (MPAS) is developed afterwards to solve the corresponding mixed integer programming problem, which can guarantee to converge to a local optimal solution. The benefits of the integrated model and the computational performance of the MPAS are illustrated by numerical examples and a case study.
\end{abstract}

\begin{IEEEkeywords}
bunkering policy, speed control, vehicle routing problem, pickup and delivery, most promising area \end{IEEEkeywords}

\section{Introduction}

The rapid growth of world population and people's living standard requires international exchanges of resources, which increases the dependence of world economy on international trading. Shipping companies provide transportation between distant markets, with the cost of operating the vessel fleet as a central concern.

Short sea shipping is the main provider for domestic trading of coastwise countries and international shipping between short-distance countries, especially in Europe and southeast Asia. For example, Coastal shipping in Greece constitutes the most important means of transporting goods and the primary means of passenger transportation, due to the geographic discontinuity\cite{Sambracos04}.

When environmental concerns have gained much attentions recently, reducing oil consumption and greenhouse gas emissions has been a key issue in all industries. Shipping heavily replies on oil, and is not yet in a position to adopt alternative energy sources. The rapid development in global trading over the past decades was powered by easily available and affordable oil. The fuel cost contributes to a large portion of the total operational cost of a shipping company. At the price of 135 USD per ton, bunker fuel costs constituted about half the operating cost of a large containership company \cite{Notteboom06}. And when fuel price reaches 500 USD per ton, bunker fuel costs will be as much as about three quarters of the operating cost of a large containership company \cite{Ronen11}. As evidenced by surges in oil prices these years, one main challenge faced by shipping companies is how to reduce fuel cost in operations. Appropriate marine fuel management strategy will not only save the environment by less greenhouse gas emission, but also benefit shipping companies by less operational costs.

Sea shipping operators have adopted certain ways to implement fleet deployment to determine appropriate route and speed for each ship to reduce fuel consumption and associated financial hemorrhage \cite{Leach08} (laying up or chartering out ships are also viable options when facing insufficient demands). Generally, slow steaming saves fuel consumption while also releasing fewer greenhouse gas. However, lower speed may lengthen the traveling time at the risk of violating service schedule. Hence, it is necessary to carefully tune cruise speed for each ship on each voyage to trade off operational cost and service quality. On the other hand, fuel prices vary a lot at different ports and sea shipping operators can take this advantage to further enhance the profitability through bunker management, i.e., where and how much to bunker, which is also critical to reduce the total operational cost.

In general, the fleet deployment refers to the decisions on which ships to operate on which route, at what speed and what cargos to be transported. The bunker management refers to the decision on which ships to bunker on which port and at what volume. And the operational cost are mainly influenced by both fleet deployment and bunker management. These two types of decisions are separately considered in practice and most of literatures, which is quite helpful in reducing the complexity in decision making. However, they are tightly coupled by planned routes. Separately considering them may not achieve efficient solutions. To further enhance green shipping, we will develop an integrated fleet deployment and bunker management model to derive better solutions.

The fuel consumption estimation itself is another crucial issue in this problem, which is mainly related to the voyage distance and the resistance a ship overcomes in water. Theoretically, not only cruise speed but also displacement contribute to the resistance. The resistance a ship overcomes is proportional to the wetted surface and the square of cruise speed \cite{PracticalShipDesign}. The wetted surface is directly related to the displacement, and hence the total weight of the ship. The fuel consumption rate is proportional to the power of ship, which can be expressed as $Fv$, and hence is proportional to the displacement and $v^3$. As the deadweight of a large cargo ship can be several times the lightweight, the effect of load on fuel consumption rate should not be ignored. However, the factor of displacement is ignored or assumes to be constant throughout voyages in most of literatures. This approximation may result in an inferior solution and degrade the effectiveness in practice. Since the pickup and delivery service order may result in quite fluctuating load on board, it is important and beneficial to consider this effect on fuel consumption from the changing of weights on board when deciding shipping service orders. We will adopt a more realistic fuel consumption function in the model.

The paper is organized as following. In the next section, related works are reviewed. Then we will formulate the problem as a mixed integer programming in section 3. Two heuristic algorithms are proposed in section 4, one of which is based on the concept of Most Promising Area and is proved to converge with probability 1. Numerical experiments are carried out in section 5 to illustrate the necessity of joint optimization and the performances of heuristics. We will close with conclusions presented in section 6.

\section{Related Works}

As early as 1982, Ronen \cite{Ronen82} presented three models for determining the optimal cruise speed under different commercial circumstances. He also pointed out that the fuel consumption is approximately proportional to the third power of cruise speed for a single vessel on an empty voyage leg. However, a review chapter by Christiansen et al.\cite{Christiansen07} reveals that relatively little research has been done regarding optimizing operation cost be adjusting the speed of vessels.

Alvarez \cite{Alvarez09} presented a model to address the routing problem and deployment problem jointly. Ship operating speed selection is considered when generating parameters for the fuel cost in objective function. The third power relationship is assumed in the model. The speed cannot be optimized since it is considered as a parameter rather than a decision variable.
Yao et al.\cite{Yao12} considered a bunker fuel management strategy study for a single ship on a liner service with fixed routing, where they highlighted the importance of joint optimization of bunkering ports selection, bunkering amount determination, and ship speed adjustment. However, the decisions are limited to bunkering and speed, with given routing information.

The fleet deployment on both strategic and tactical levels in shipping industry is highly related to the vehicle routing problems with pickups and deliveries(VRPPD), which have attracted much attention in research over the past few decades. There are generally three kinds of models in VRPPD.

\begin{enumerate}
    \item Vehicle routing problem with backhauls(VRPB): This kind is also known as delivery-first, pickup-second VRPPD, as all deliveries must be completed before commencing pick-ups. Customers can be divided into linehauls with delivery demands, and backhauls with pickup demands.
    \item Vehicle routing problem with mixed pickup and delivery(VRPMPD): No restriction is  set on the sequence of linehauls and backhauls.
    \item Vehicle routing problem with simultaneous pickup and delivery(VRPSPD): Customer ports may simultaneously receive and send goods.
\end{enumerate}

For a same problem, adopting the assumption of VRPB will often result in poor quality solutions compared with the other two \cite{Nagy05}. Although it may cause some difficulties in operation, VRPMPD and VRPSPD are more efficient ways to manage shipping activities. We note that VRPMPD can be modelled as VRPSPD with either pickup demand or delivery demand being zero. Hence, we will focus on vehicle routing problem with simultaneous pickup and delivery.

The vehicle routing problem with simultaneous pickups and deliveries was first introduced by Min\cite{Min89}, where a practical problem of a public library is solved. Customers are first clustered into groups, and TSPs are solved for each group. After penalization of infeasible arcs, TSPs are re-solved. Gendreau et al.\cite{Gendreau99} studied the traveling salesman problem with pickup and delivery by solving them as simple TSP regardless of pickup and delivery, and determine the order of pickups and deliveries based on the TSP tour. These two algorithms are developed based on their specific problem structure, and provided acceptable performance. However, their problem structure is quite different from our problem, with much less decision variables. Nagy and Salhi\cite{Nagy05} proposed a heuristic to solve both VRPSPD and VRPMPD. Initial solutions are generated based on constructive heuristic and composite improvement procedures are carried out. A lot other papers use the similar two-stage heuristics with different generation methods. Ganesh and Narendran\cite{Ganesh07} addressed the problem by a 4-phase algorithm. Two-stage heuristics can be used to solve complex problem when simple initial solutions and effective improvement procedures can be provided. Hence, a two-stage heuristic algorithm will be proposed in this paper to solve our problem.

\section{Problem Formulation}

\subsection{Problem description}

In this paper, we consider a specific sea shipping company with an depot settled in a city in east Asia. It mainly operates short sea shipping business between ports in Asia in a tramp mode through the depot.

Without loss of generality, assume that the shipping company owns a fleet of $K$ ships with limited capacities and is planning to operate shipping service on a set of ports ${1,2,...,N}$, among which port $1$ is the depot, and the other ports are customer ports. The shipping network can be modeled as a graph $G=(V,E)$, where $V={1,2,...,N}$ is the set of ports and $E$ is the set of feasible voyages between ports.

Demands for pickup and delivery may simultaneously exist for each customer port. Each unit of fulfilled demand would bring some revenue to the company. Ships start their journey from the depot, visit a sequence of customer ports, and return to the depot eventually. A ship can also be charted out (or just laid up) if the potential profit is less than some threshold due to insufficient demands or high operational cost. (The threshold can be estimated as the profit by chartering out the vessel or some operating overhead.)

As splitting demands such that the delivery of certain loads is completed in multiple trips rather than one trip results in opportunities for a reduction in cost and the number of vehicle used, by the efficient use of excess capacities \cite{Nowak08}, it is allowed in this model that the demands for a certain customer port can be fulfilled by more than one ship.

Each customer port may require some visiting time window in which ships can serve. The port processing times at each port, including entry time, unloading and loading time, idle time and exiting time, is assumed to be known or can be estimated with reasonable accuracy, and hence is assumed to be fixed in this model. The sailing time between any two ports is determined by the cruise speed on that voyage. Although the speed is practically variable along time, the average cruise speed can be utilized to determine the schedule and fuel consumption in the tactical level. So the speed is assumed to be constant within each voyage.

The company receives higher revenue by fulfilling more demands, which in turn raises fuel consumption and increases greenhouse gas emissions due to larger amount of freight tonnage on board or longer traveling distance. The fleet deployment and bunker management are applied to improve profitability and reduce operational cost through two ways respectively: less fuel consumption and lower bunkering cost. In the fleet deployment, the fuel consumption rate is proportional to the total weight and the third power of speed with the assumption that the displacement is approximately proportional to the total weight of a ship. In the bunker management, the fuel prices are different at each port. As incremental quantity price discounts are commonly applied \cite{Weng95}, and bunkering ports usually provide price discounts under contractual agreement \cite{Yao12}, it is assumed in this problem that ports provide price discounts on incremental quantity of fuel bunkered by a ship.

The shipping company tries to  maximize the total profit earned by the fleet, where the operational cost is considered as the total bunker fuel costs.  Although shipping service incurs some other costs, such as the administrative costs, capital costs, container costs, etc.,these cost can be considered approximately constants with the number of ships and total cycle time fixed.

The characteristics of the problem can be summarized as follows:
\begin{enumerate}
\item Pickup and delivery demands may simultaneously exist at every customer port;
\item The company is allowed to choose ports to serve according to demands received;
\item Partial fulfillment is allowed for each serving port;
\item Demands at each port can be split and served by multiple ships;
\item It is a joint optimization of routing, scheduling, speed, pickup and delivery quantities, bunkering ports and volume for each ship (laying up or chartering out ships are viable options);
\item The freight tonnage on board is considered as an important factor in estimating fuel consumption rate, instead of cruise speed as the only factor. This assumption leads to more precise estimation while produces nonlinear constraints;
\item When determining bunkering policies, different fuel prices at each port is taken into consideration. The incremental quantity price discounts are also considered.
\end{enumerate}

\subsection{Problem Formulation}
The decision variables the model determines are as follows:

\begin{tabular}{p{15pt}p{200pt}}
\\
$Q_{i1}^k$ & Delivery quantities from depot to port $i$ by ship $k$\\
$Q_{i2}^k$ & Pickup quantities from port $i$ to depot by ship $k$\\
$t_{ij}^k$ & Traveling time of ship $k$ per unit distance from port $i$ to $j$\\
$x_{ij}^k$ & Whether ship $k$ travels from port $i$ to $j$. \\
& Note that $x_{1,N+1}^k=1$ indicates that ships $k$ stays at depot.\\
$B_i^k$ & Bunkering volume of ship $k$ at port $i$\\
$y_i^k$ & Whether to bunker ship $k$ at port $i$\\
 &
\end{tabular}

The state variables used in the model are:

\begin{tabular}{p{15pt}p{200pt}}
\\
$I_i^k$ & Residual fuel volume of ship $k$ when entering port $i$\\
$W_i^k$ & Displacement of ship $k$ when leaving port $i$\\
$a_i^k$ & Arrival time of ship $k$ at port $i$\\
 &
\end{tabular}

The following parameters represent the known data for the problem:

\begin{tabular}{p{15pt}p{200pt}}
\\
$N$ & Number of ports including the depot.\\
&For ease of notation, $N+1$ is defined as the index for a virtual\\
&port with the same location as the depot.\\
$K$ & Number of ships\\
$R^k$ & Threshold revenue for ship $k$\\
$r_{i1}$ & Unit delivery revenue at port $i$\\
$r_{i2}$ & Unit pickup revenue at port $i$\\
$D_{i1}$ & Total delivery demands at port $i$\\
$D_{i2}$ & Total pickup demands at port $i$\\
$I_{\max}^k$ & Maximum fuel volume of ship $k$\\
$e^k$ & Minimum bunkering level of ship $k$\\
$c^k$ & Safety fuel level of ship $k$\\
$W_0^k$ & Lightweight of ship $k$, i.e. the weight of empty ship\\
$W_{\max}^k$ & Deadweight of ship $k$, i.e. the weight when fully loaded\\
$w$ & weight per unit cargo\\
$d_{ij}$ & Distance between port $i$ and $j$\\
$s_i^k$ & Port processing time of ship $k$ at port $i$\\
$M$ & A large enough number\\
 &
\end{tabular}

The problem can be formulated as the Mix Integer Programming (MIP) Problem below. The program is nonlinear because of the nonlinear fuel cost function of $f_i(B_i^k)$ and the fuel consumption function of $g_k(W_i^k,t_{ij}^k)$, which can be approximated by piece-wise linear function later.

\newcounter{mytempeqncnt}
\begin{figure*}
\normalsize
\setcounter{mytempeqncnt}{\value{equation}}
\setcounter{equation}{0}
\begin{align}
\max\quad & \sum\limits_{k=1}^K \sum\limits_{i=2}^N (r_{i1}Q_{i1}^k+r_{i2}Q_{i2}^k)+\sum\limits_{k=1}^K R_kx_{1,N+1}^k-\sum\limits_{k=1}^K \sum\limits_{i=1}^N f_i(B_i^k)\nonumber\\
\mathrm{s.t.}\quad & \sum\limits_{k=1}^K Q_{i1}^k\leq D_{i1}, \sum\limits_{k=1}^K Q_{i2}^k\leq D_{i2}, \quad \forall i=2,3,...,N;\\
& Q_{i1}^k\leq M\sum_{j\neq i}x_{ij}^k, Q_{i2}^k\leq M\sum_{j\neq i}x_{ij}^k, \quad \forall i=2,3,...,N,k=1,2,...,K;\\
& \sum\limits_{j=2}^N x_{1j}^k=1-x_{1,N+1}^k, \quad \forall k=1,2,...,K;\\
& \sum\limits_{i=2}^N x_{i,N+1}=1-x_{1,N+1}^k, \quad \forall k=1,2,...,K;\\
& \sum_{j\neq i,j=2,3,...,N} x_{ij}^k\leq 1-x_{1,N+1}^k, \quad \forall i=2,3,...,N, k=1,2,...,K;\\
& \sum_{j\neq i}x_{ij}^k=\sum_{j\neq i}x_{ji}^k, \quad \forall i=2,3,...,N, k=1,2,...,K;\\
& W_1^k=w\sum\limits_{i=2}^N Q_{i1}^k+W_0^k, \quad \forall k=1,2,...,K;\\
& W_j^k\leq W_i^k+w(Q_{j2}^k-Q_{j1}^k)+M(1-x_{ij}^k), \quad \forall i=1,2,...,N, j=2,3,...,N+1, i\neq j, k=1,2,...,K;\\
& W_j^k\geq W_i^k+w(Q_{j2}^k-Q_{j1}^k)-M(1-x_{ij}^k), \quad \forall i=1,2,...,N, j=2,3,...,N+1, i\neq j, k=1,2,...,K;\\
& W_i^k\leq W_{\max}^k, \quad \forall i=1,2,...,N, k=1,2,...,K;\\
& I_1^k=I_{N+1}^k, \quad \forall k=1,2,...,K;\\
& I_j^k\leq I_i^k+B_i^k-d_{ij}g_k(W_i^k,t_{ij}^k)+M(1-x_{ij}^k), \quad \forall i=1,2,...,N, j=2,3,...,N+1, i\neq j, k=1,2,...,K;\\
& I_j^k\geq I_i^k+B_i^k-d_{ij}g_k(W_i^k,t_{ij}^k)-M(1-x_{ij}^k), \quad \forall i=1,2,...,N, j=2,3,...,N+1, i\neq j, k=1,2,...,K;\\
& y_i^k\leq \sum_{j\neq i}x_{ij}^k, \quad \forall i=1,2,...,N, k=1,2,...,K;\\
& e^kI_m^ky_i^k\leq B_i^k\leq I_m^ky_i^k, \quad \forall i=1,2,...,N, k=1,2,...,K;\\
& I_i^k\geq c^kI_m^k, \quad \forall i=1,2,...,N+1, k=1,2,...,K;\\
& I_i^k+B_i^k\leq I_{\max}^k, \quad \forall i=1,2,...,N, k=1,2,...,K;\\
& a_1^k = 0, \quad \forall k=1,2,...,K;\\
& a_j^k\geq a_i^k+d_{ij}t_{ij}^k+s_i^k-M(1-x_{ij}^k), \quad \forall i=1,2,...,N, j=2,3,...,N+1, i\neq j, k=1,2,...,K;\\
& a_j^k\leq a_i^k+d_{ij}t_{ij}^k+s_i^k+M(1-x_{ij}^k), \quad \forall i=1,2,...,N, j=2,3,...,N+1, i\neq j, k=1,2,...,K;\\
& e_i\leq a_i^k\leq l_i, \quad \forall i=2,3,...,N, k=1,2,...,K;\\
& a_{N+1}^k\leq T^k, \quad \forall k=1,2,...,K;\\
& Q_{i1}^k\geq 0, Q_{i2}^k\geq 0, \quad \forall i=2,3,...,N, k=1,2,...,K;\\
& x_{ij}^k=0 \text{ or } 1, \quad \forall i=1,2,...,N, j=2,3,...,N+1, i\neq j, k=1,2,...,K;\\
& y_i^k=0 \text{ or } 1, \quad \forall i=1,2,...,N, k=1,2,...,K.
\end{align}
\setcounter{equation}{\value{mytempeqncnt}}
\hrulefill
\vspace*{4pt}
\end{figure*}

The objective function is to maximize the total profit for the fleet, where the first term is the revenue earned by fulfilling demands from customer ports, the second term is the revenue earned by chartering out ships, and the last term is the total fuel costs, where $f_i(B_i^k)$ is the fuel cost function considering increment quantity price discounts at port $i$ and defined as
\[\left\{ {\begin{array}{*{20}{c}}
   {p_i^1 B_i^k}  \\
   {p_i^1 v_1+p_i^2(B_i^k-v_1)}  \\
   {p_i^1 v_1+p_i^2(v_2-v_1)+p_i^3(B_i^k-v_2)}  \\
\end{array}{\rm{  }}\begin{array}{*{20}{c}}
   {B_i^k\in[0,v_1]}  \\
   {B_i^k\in[v_1,v_2]}  \\
   {B_i^k\in[v_2,v_3]}  \\
\end{array}} \right.
\]

Constraint (1) says that the quantity served at each customer port should not exceed its demand. Constraint (2) states that if a customer port is not visited by a ship, there is no demand that can be served. Constraints (3) to (6) define the routings for the fleet. (3) and (4) ensures that ships start and end their journey at the depot if it is used by the fleet. (5) makes sure that no internal voyages is traveled by ships chartered out. And (6) is the flow conservation constraint. (7) to (10) are constraints subject to displacements of ships. (7) is the initial status, while (8) and (9) describe the changes in the displacement of ships in terms of the quantity of cargos they are carrying. (10) indicates the deadweight displacement for ships. It is worth to mention here that these constraints with deadweight defined for each ship also serve as the capacity constraints in terms of the largest amount of cargo a ship can carry. Constraints (11) to (17) describe the amount of fuel carried by ships along their journey. Constraint (11) assumes that the amount of fuel a ship carries at the beginning and the end of the journey are the same. (12) and (13) together describe the dynamic of fuel consumption along each traveled voyage, where $g_k(W_i^k,t_{ij}^k)$ is the fuel consumption of ship $k$ per unit distance and defined as $g_k(W_i^k,t_{ij}^k)=C^k\frac{W_i^k}{(t_{ij}^k)^2}$, where $C^k$ is a specific constant for ship $k$. (14) is the indicator constraint to make sure that a ship can only bunker at some port it visits. (15) to (17) state the upper limits and lower limits of bunkering amount and fuel levels. Constraints (18) to (22) are the time window constraints. (19) and (20) together describe the time spent on each voyage, and (21) and (22) states the time window at each port. Note that with time window constraints (19) and (20), subtour elimination constraints are no longer necessary to define the problem. At last, constraint (23) to (25) define the decision variables.

\section{Heuristics}
The problem is NP-hard since it is a generalization of the classical VRP. When there are $N$ ports (including depot) and $K$ ships in the problem, there are $N(N-1)$ voyages to be considered. The number of variables is $15KN^2+5KN$, in which $9KN^2+K$ of them are binary variables. The complexity determines the rapid growth of computational time. Only small problems can be solved by exact algorithms using the formulated MIP. Hence, heuristic algorithms are necessary.

The decision variables are tightly coupled. The routing decision determines the set of visiting ports and fuel consumption for each ship, and hence the bunkering ports and bunkering amounts depend on routing and scheduling. On the other hand, the routing has to be adjusted in order to reduce total fuel consumption and bunkering costs. It is desirable for a heuristic to keep the advantage of the joint optimization of these decisions.

Note that if the sets of visiting ports for each ship are known, the decisions can be made independently for each single ship. The original problem can then be partially decomposed into $K$ single-vehicle problems. We can still use the formulation to solve each of them, since it contains only one ship and potentially much less number of ports. Even for large-scale problems, the single-vehicle problems can be still efficiently solved using optimization tools. On the other hand, the coupling of decision variables within each ship is kept. Hence, we propose a two-stage heuristic. In the first stage, the assignment of ports to ships is generated, and in the second stage decisions are made using optimization tools for each single ship based on the assignment. The two stages are iteratively carried out to find the best assignment. The main challenge is then to find an efficient way of searching a promising assignment.

The heuristic is also reasonable from a managerial perspective. Determining the assignment can be seen as a tactic level decision, while routing, scheduling and profit management can be treated as operational level decisions.

We first define a set of binary vectors $y_k=[y_k(1),y_k(2),...,y_k(N-1)],k=1,2,...,K$, where $y_k(i)=1$ indicates that customer port $i$ is served ship $k$, and 0 otherwise. $Y\in \Theta$ is then defined based on $y_k$: $Y=[y_1,y_2,...,y_K]$. $Y$ can be thought as a variable which assigns customer ports to ships. Given a specific assignment, a simple optimization problem can be solved for each single ship. Let $h(Y)$ be the total optimized profit when customer ports are assigned to ships as $Y$. The key problem is then to find an optimized assignment $Y$ for ports.

\subsection{Single ship problems}
Given an assignment $Y$, the optimal decisions can be made for each single ship independently. The following formulation can efficiently solve each single ship problem with the number of visiting ports as $n$.
\newcounter{tempeqncnt}
\begin{figure*}
\normalsize
\setcounter{tempeqncnt}{\value{equation}}
\setcounter{equation}{25}
\begin{align}
\max\quad & \sum\limits_{i=2}^n (r_{i1}Q_{i1}+r_{i2}Q_{i2})-\sum\limits_{i=1}^n f_i(B_i)\nonumber\\
\mathrm{s.t.}\quad & Q_{i1}\leq D_{i1}, Q_{i2}\leq D_{i2}, \forall i=2,3,...,n;\nonumber\\
& \sum_{j\neq i}x_{ij}=1, \forall i=1,2,...,n;\\
& \sum_{j\neq i}x_{ji}=1, \forall i=2,3,...,n+1;\\
& W_1=w\sum\limits_{i=2}^n Q_{i1}+W_0;\nonumber\\
& W_j\leq W_i+w(Q_{j2}-Q_{j1})+M(1-x_{ij}), \forall i=1,2,...,n, j=2,3,...,n+1, i\neq j;\nonumber\\
& W_j\geq W_i+w(Q_{j2}-Q_{j1})-M(1-x_{ij}), \forall i=1,2,...,n, j=2,3,...,n+1, i\neq j;\nonumber\\
& W_i\leq W_{\max}, \forall i=1,2,...,n;\nonumber\\
& I_1=I_{n+1};\nonumber\\
& I_j\leq I_i+B_i-d_{ij}g(W_i,t_{ij})+M(1-x_{ij}), \forall i=1,2,...,n, j=2,3,...,n+1, i\neq j;\nonumber\\
& I_j\geq I_i+B_i-d_{ij}g(W_i,t_{ij})-M(1-x_{ij}), \forall i=1,2,...,n, j=2,3,...,n+1, i\neq j;\nonumber\\
& eI_my_i\leq B_i\leq I_my_i, \forall i=1,2,...,n;\nonumber\\
& I_i\geq cI_m, \forall i=1,2,...,n+1;\nonumber\\
& I_i+B_i\leq I_{\max}, \forall i=1,2,...,n;\nonumber\\
& a_1 = 0;\nonumber\\
& a_j\geq a_i+d_{ij}t_{ij}+s_i-M(1-x_{ij}), \forall i=1,...,n, j=2,...,n+1, i\neq j;\nonumber\\
& a_j\leq a_i+d_{ij}t_{ij}+s_i+M(1-x_{ij}), \forall i=1,...,n, j=2,...,n+1, i\neq j;\nonumber\\
& e_i\leq a_i\leq l_i, \forall i=2,3,...,n;\nonumber\\
& a_{n+1}\leq T;\nonumber\\
& Q_{i1}\geq 0, Q_{i2}\geq 0, \forall i=2,3,...,n;\nonumber\\
& x_{ij}=0 \text{ or } 1, \forall i=1,2,...,n, j=2,3,...,n+1, i\neq j;\nonumber\\
& y_i=0 \text{ or } 1, \forall i=1,2,...,n.\nonumber
\end{align}
\setcounter{equation}{\value{tempeqncnt}}
\hrulefill
\vspace*{4pt}
\end{figure*}

Note that constraints (26) and (27) ensures that the ship visits every customer ports that are assigned to it.

Using this formulation, we can calculate the optimal profit with any given assignment $Y$ as in Table \ref{tabSingle}. Note that in Step 4, we will examine whether the profit by serving customer ports is larger than the profit of chartering out the ship. If not, the ship will not set out.

\begin{table}[ptbh]
\caption{Calculating optimal profit for a given assignment $Y$}%
\label{tabSingle}
\renewcommand{\arraystretch}{1.5}
\centering
\par%
\begin{tabular}
[l]{p{28pt}p{190pt}}\hline\hline \textbf{Step 1}: &$k=1$, set the demands at each customer ports as the real demands;\\
\textbf{Step 2}: & For ship $k$, check the $(k-1)*(N-1)+1$ to $k*(N-1)$ digits of $Y$, and determine the visiting ports;\\
\textbf{Step 3}: & Solve a single ship problem for the visiting ports, with optimal profit $p_k^*$ and pickup and delivery amount $Q_{i1}^*$, $Q_{i2}^*$;\\
\textbf{Step 4}: & If the single ship profit exceeds the chartering profit for the ship, delete the fulfilled amount from the ports' demands. Otherwise, set this ship as chartered out, and $p_k^*=R_k$. If $k<K$, set $k=k+1$, and go to step 2;\\
\textbf{Step 5}: & Calculate the optimal profit $h(Y)=\sum\limits_{k=1}^K p_k^*$.\\
\hline\hline
\end{tabular}
\end{table}

\subsection{Neighborhood Search}
There are some existing heuristics to search for a local optimum within discrete variables. The most common one is neighborhood search. For chosen assignment, its neighborhood is examined. If a better assignment is found, that assignment is chosen and the procedure is repeated.
In this paper, we will continue to use the definition of neighborhood in the last section. The algorithm is shown in Table \ref{tabNS}.

\begin{table}[ptbh]
\caption{Heuristic based on neighborhood search}%
\label{tabNS}
\renewcommand{\arraystretch}{1.5}
\centering
\par%
\begin{tabular}
[l]{p{28pt}p{190pt}}\hline\hline \textbf{Step 1}: & Generate an initial solution $Y_0\in \Theta$, calculate $h(Y_0)$. Set $Y^*=\{Y_0\}$ and $h^*=h(Y_0)$;\\
\textbf{Step 2}: & Examine neighborhood of $Y^*$ by changing its digits once at a time, if there is $Y_1\in N(Y^*)$ such that $h(Y_1)>h(Y^*)$, set $Y^*=\{Y_1\}$ and $h^*=f(Y_1)$, go to Step 2;\\
\textbf{Step 3}: & $Y^*$ is a local optimal solution.\\
\hline\hline
\end{tabular}
\end{table}

However, the size of neighborhood increases with the increment of number of ports and ships. A more efficient way of converging is still needed for large problems.

\subsection{Heuristic based on Most Promising Area}
Hong and Nelson \cite{HongNelson06} proposed an algorithm termed convergent optimization via most-promising-area stochastic search (COMPASS), based on a unique neighborhood structure. The neighborhood is defined in each iteration, and is fully adaptive. The idea that the most promising area is defined as the set of feasible solutions that are at least as close to the current best solution as they are to other visited ones, can be utilized to solve the fleet problem in this paper. Our algorithm is going to modify COMPASS in three aspects to solve our problem. First, COMPASS solves problems with integer solutions, while this problem only deals with binary variables. This also results in the second difference, that we will not use Euclidean distance as the definition of distances between points, which is used in COMPASS. The last difference is that COMPASS is proposed for stochastic problems, but the ways of searching neighborhood is also suitable for the deterministic problem in this paper.

The distance between two assignment variable $Y_1$ and $Y_2$, $dist(Y_1,Y_2)$ is defined as the number of different digits, i.e. the number of $i$'s such that $Y_1(i)\neq Y_2(i)$. Let $N(Y)=\{Z:Z\in \Theta \text{ and } dist(Y,Z)=1\}$ be the local neighborhood of $Y\in \Theta$. $Y$ is a local optimum if $h(Y)\geq h(Z)$ for all $Z\in N(Y)$. Let $\Phi$ denote the set of local optima in $\Theta$.

In this algorithm, we use $R_r$ to denote the set of all solutions visited through iteration $r$, and use $Y_r^*$ to denote the solution with the largest total profit among all $Y\in R_r$. At the end of iteration $r$, we construct the most promising area $\zeta_r=\{Y:Y\in \Theta \text{ and } dist(Y,Y_r^*)\leq dist(Y,Z), \forall Z\in R_r \text{ and } Z\neq Y_r^*\}$. The set $\zeta_r$ includes all feasible solutions that are at least as close to $Y_r^*$ as to other solutions in $R_r$. At iteration $r+1$, $m$ solutions will be randomly generated from $\zeta_r$ uniformly and independently. The algorithm based on Most Promising Area is shown in Table \ref{tabMPA}.

\begin{table}[ptbh]
\caption{Heuristic based on Most Promising Area}%
\label{tabMPA}
\renewcommand{\arraystretch}{1.5}
\centering
\par%
\begin{tabular}
[l]{p{28pt}p{190pt}}\hline\hline \textbf{Step 1}: & Set iteration count $r=0$. Generate an initial solution $Y_0\in \Theta$, set $R_0=\{Y_0\}$ and $Y_0^*=Y_0$. Calculate $h(Y_0)$. $\zeta_0=\Theta$;\\
\textbf{Step 2}: & Let $r=r+1$. Generate $Y_{r1},Y_{r2},...,Y_{rm}$ from $\zeta_{r-1}$ uniformly and independently, and calculate $h(Y_{ri})$ for every newly generated assignment;\\
\textbf{Step 3}: & Let $R_r=R_{r-1}\cup \{Y_{r1},Y_{r2},...,Y_{rm}\}$, and $Y_r^*=\operatorname{arg\,max}_{Y\in R_r} h(Y)$. Construct $\zeta_r$. If $Y_r^*$ has not changed for certain iterations, stop. Else, and go to step 2.\\
\hline\hline
\end{tabular}
\end{table}

Other terminal conditions can be adopted. More iterations can be examined for precision. Or the algorithm can be stopped whenever all of the computational budget is consumed.

It is worthwhile to note that this algorithm is efficient even for large scale problems. The number of points generated in each iteration is fixed, and will not affected by the size of neighborhood. The computational time is hence proportional to the number of iterations, number of points generated in each iteration, the number of ships, and the time used for each single ship problem. The computational complexity is then mainly dependent on that of single ship problems. Due to time and capacity constraints, the number of ports assigned to a ship is limited, so each single ship problem can be solved efficiently.

Of course we have to make sure the convergence of the algorithm. The proof is similar to the one in \cite{HongNelson06}, but we need to prove it under our definitions.

\begin{theorem} The infinite sequence $\{Y_0^*,Y_1^*,...\}$ generated by the algorithm converges with probability 1 to the set $\Phi$, i.e. $P\{Y_r^*\in \Phi\quad i.o.\}=1$.
\end{theorem}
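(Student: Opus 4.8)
The plan is to follow the COMPASS convergence argument of \cite{HongNelson06}, specialized to the finite binary search space $\Theta$ with the Hamming metric, and simplified by the fact that the problem here is deterministic so that $h(Y)$ is evaluated exactly (no sampling error in the objective). First I would record three elementary facts that hold on every sample path. (i) $\Theta$ is finite, with $|\Theta|\le 2^{K(N-1)}=:L$, and $h$ is real-valued and bounded on $\Theta$, since every single-ship subproblem is feasible and has a finite optimum (chartering out is always an option). (ii) Because $R_r\subseteq R_{r+1}$, the incumbent value $h(Y_r^*)$ is non-decreasing in $r$; being bounded above and taking finitely many values, it is eventually constant, equal to some $h^*$. (iii) Fixing a deterministic tie-breaking rule in Step 3, once $h(Y_r^*)=h^*$ the incumbent $Y_r^*$ is the (say, lexicographically) smallest maximizer in the non-decreasing set $R_r\cap\{h=h^*\}$, hence is itself eventually constant; call its limit $Y_\infty^*$ and let $r_1$ be the (random, path-dependent) iteration from which $Y_r^*=Y_\infty^*$ onward. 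It then suffices to prove $Y_\infty^*\in\Phi$ almost surely, which is in fact stronger than the stated claim, as it gives $Y_r^*\in\Phi$ for all large $r$.

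The structural heart of the proof is the following observation: if $Y_r^*=Y$ and $Z\in N(Y)$ has not yet been visited, then $Z\in\zeta_r$. Indeed $dist(Z,Y)=1$; and since $Z\notin R_r$, we have $Z\neq Z'$ and hence $dist(Z,Z')\ge 1$ for every $Z'\in R_r$ with $Z'\neq Y$. Therefore $dist(Z,Y_r^*)=1\le dist(Z,Z')$ for all such $Z'$, which is precisely the defining condition for membership in $\zeta_r$. In words: the still-unexplored local neighbors of the current incumbent always lie in the most-promising area from which the next batch is drawn. For the Hamming-$1$ neighborhood this is almost immediate, which is the main simplification relative to the Euclidean setting of \cite{HongNelson06}.

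Now argue by contradiction on the a.s. event that $Y_r^*$ stabilizes. Suppose $Y_\infty^*\notin\Phi$. Then there is a neighbor $Z\in N(Y_\infty^*)$ with $h(Z)>h^*$, and $Z$ can never have been visited, for otherwise $h(Y_r^*)\ge h(Z)>h^*$, contradicting (ii). Hence for every $r\ge r_1$ we have $Y_r^*=Y_\infty^*$ and $Z\notin R_r$, so by the structural observation $Z\in\zeta_r$; since $|\zeta_r|\le L$, the probability that $Z$ appears among the $m$ solutions drawn uniformly and independently at iteration $r+1$, conditional on the history through iteration $r$, is at least $\delta:=1-(1-1/L)^m>0$. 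Because this uniform lower bound applies at infinitely many iterations on the event considered, the conditional (Lévy) Borel--Cantelli lemma forces $Z$ to be sampled with probability one, contradicting that $Z$ is never visited. To make this fully rigorous I would partition $\{Y_\infty^*\notin\Phi\}$ into the finitely many events $\{Y_\infty^*=y\}\cap\{z\text{ never visited}\}$ indexed by pairs $y\in\Theta\setminus\Phi$, $z\in N(y)$, $h(z)>h(y)$, and show each has probability zero: on such an event the indicator of $\{Y_r^*=y,\ z\notin R_r\}$ equals $1$ for all $r\ge r_1$, so $\sum_r P(\{z\text{ in batch }r+1\}\mid\mathcal F_r)\ge\sum_{r\ge r_1}\delta=\infty$, and the conditional Borel--Cantelli lemma yields that $z$ is drawn at least once, a contradiction.

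The step I expect to demand the most care is the last one: the iteration $r_1$ after which the incumbent is frozen is not a stopping time, so the probabilistic argument cannot simply be ``restarted'' at $r_1$. This is exactly why I would invoke the conditional Borel--Cantelli lemma rather than an elementary geometric-series bound, relying on the facts that $\{Y_r^*=y,\ z\notin R_r\}$ is $\mathcal F_r$-measurable and that on it the next-iteration hitting probability is bounded below by the constant $\delta$ uniformly in $r$. Everything else --- finiteness of $\Theta$, monotone stabilization of the incumbent, and the ``unvisited neighbors lie in $\zeta_r$'' lemma --- is routine.
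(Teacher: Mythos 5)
Your proposal is correct and follows essentially the same route as the paper's proof: both hinge on the observation that an unvisited improving neighbor $Z$ of a non-locally-optimal incumbent satisfies $dist(Z,Y_r^*)=1\le dist(Z,Y')$ for every other visited $Y'$, hence lies in $\zeta_r$ and is drawn with probability bounded below by a positive constant, so with probability 1 it is eventually sampled and dethrones the incumbent. The only difference is bookkeeping: the paper conditions on the realization of the limit set $R_\infty=A$ and asserts the almost-sure sampling of $Z$ directly, whereas you stabilize the incumbent via a tie-breaking rule and invoke the conditional Borel--Cantelli lemma, which is a more careful treatment of the fact that the stabilization index is not a stopping time --- a point the paper's argument glosses over.
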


\begin{proof}
For any infinite sequence $\{Y_0^*,Y_1^*,...\}$ generated by the algorithm, $A=R_\infty$ exists since $R\in \Theta$ and $\Theta$ is a finite set.
Since $|R_\infty|<\infty$, there exists a $\kappa >0$ such that $R_r=R_\infty=A$ for all $r\geq \kappa$. Then for $r\geq \kappa$,
\[h(Y_r^*)=\max_{Y\in R_r} h(Y)=\max_{Y\in R_\infty} h(Y)\]
Hence
\setcounter{equation}{27}
\begin{equation}\label{eqt1}
P\{\lim_{r\to \infty} h(Y_r^*)=\max_{Y\in R_\infty} h(Y)|R_\infty=A\}=1
\end{equation}
Note that
\begin{eqnarray}
P\{Y_r^*\in \Phi\quad i.o.\}=\qquad\qquad\qquad\qquad\qquad\qquad\qquad\nonumber\\
\quad\quad\quad\sum_A P\{Y_r^*\in \Phi\quad i.o.|R_\infty=A\}P\{R_\infty=A\}\nonumber
\end{eqnarray}

Then proving the theorem is equivalent to proving $P\{Y_r^*\in \Phi\quad i.o.|R_\infty=A\}=1$ for any nonempty $A$ such that $P\{R_\infty=A\}>0$. Since $Y_r^*=\operatorname{arg\,max}_{Y\in R_r} h(Y)$, there exists a $Y$ such that $Y\in A, Y\not\in \Phi$ and $Y_r^*=Y$. If $Y_r^* \not\in \Phi\quad i.o.,$ it is not a local optimum, and there exists a $Z\in N(Y)$ and $h(Z)>h(Y)$. For any other solution $Y'\in R_r$, $dist(Z,Y)=1\leq dist(Z,Y')$. According to the definition of most promising area, $Z\in \zeta_r$. As solutions are generated uniformly in $\zeta_r$, we have
\[P\{Z\in R_{r+1}|Y_r^*=Y \text{ and } Z\not\in R_r\}=\frac{1}{|\zeta_r|}\geq \frac{1}{|\Theta|}>0\]
Hence, $P\{Z\in A|R_\infty=A,Y_r^*=Y\}=1$. Note that if $Z\in A$, then $Y$ is not a local optimum since $h(Z)>h(Y)$. By equation (\ref{eqt1}), we know that with probability 1, $Y_r^*$ can only equal to $Y$ finitely many times, which contradicts with "infinitely often". Therefore, $Y_r^*\in \Phi\quad i.o.$ with probability 1, which proves the theorem.
\end{proof}

\subsection{Improvements}
As the generation of assignments is actually a random generation of a vector of binary variables, it is highly possible that there are few or even none remaining demands for a port after visited by other ships. Although there are enough demands, the ship may not doing any services for the port due to capacity constraints. In these two situations, the ship will still visit the ports although there is no service activity there. We will show that it is not desirable for a ship to visit a port if no service and no bunkering happens there, under the assumption that the travelling distance between any two ports $i,k$ is no larger than that the distance with another port $j$ visited between them, i.e. $d_{ik}\leq d_{ij}+d_{jk}$.

We will first prove a lemma that same velocity should be used on the voyages to and from the no-service port.

\begin{lemma}\label{lemma1} If a ship visit port $i,j,k$ in sequence, while there is no pickup, delivery and bunkering activities at port $j$, then the cruise speed is equal on two voyages, i.e. $v_{ij}=v_{jk}$.
\end{lemma}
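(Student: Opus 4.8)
The plan is to argue by contradiction: starting from an optimal solution in which the two speeds differ, I would exhibit a feasible modification that burns strictly less fuel on the legs $i\to j$ and $j\to k$ and therefore allows strictly less bunkering, contradicting optimality. The modification only reallocates traveling time between the two legs while keeping their total fixed, so that nothing downstream of $k$ is disturbed.

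First I would simplify the quantities attached to $j$. Since there is no pickup or delivery at $j$, constraints (8) and (9) force $W_j^k=W_i^k$, and since there is no bunkering, $B_j^k=0$. Writing $v_{ij}^k=1/t_{ij}^k$ and using $g_k(W,t)=C^kW/t^2$, the fuel consumed on the two legs is
\[
d_{ij}\,g_k(W_i^k,t_{ij}^k)+d_{jk}\,g_k(W_j^k,t_{jk}^k)=C^kW_i^k\bigl(d_{ij}(v_{ij}^k)^2+d_{jk}(v_{jk}^k)^2\bigr),
\]
so it depends on the two legs only through $v_{ij}^k$ and $v_{jk}^k$, the weight $W_i^k$ being already fixed by the (unchanged) service quantities.

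Now suppose an optimal solution has $v_{ij}^k\neq v_{jk}^k$. I would keep $x$, $Q$, $y$ and every speed other than those on the two legs fixed, and replace $t_{ij}^k,t_{jk}^k$ by the common value $\bar t=(d_{ij}t_{ij}^k+d_{jk}t_{jk}^k)/(d_{ij}+d_{jk})$. Then $d_{ij}\bar t+d_{jk}\bar t=d_{ij}t_{ij}^k+d_{jk}t_{jk}^k$, so by (19)--(20) the arrival time at $k$, and hence at every subsequent port and at the returning depot, is unchanged; since the $Q$'s are untouched, the displacement and time-window constraints (7)--(10) and (18)--(22) still hold (the window at $j$ plays no role, as no service occurs there). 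It then remains to check (i) that this strictly reduces the fuel burned on the two legs, and (ii) that the saved fuel can be absorbed so that the cyclic balance $I_1^k=I_{N+1}^k$ and the bunkering bounds (15)--(17) stay satisfied — which I would do by lowering the bunkered volume at some bunkering port of ship $k$ by exactly the amount saved, strictly decreasing $\sum_i f_i(B_i^k)$ and thus the objective, a contradiction.

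For (i), put $\tau_1=t_{ij}^k$, $\tau_2=t_{jk}^k$ and weights $p_1=d_{ij}/(d_{ij}+d_{jk})$, $p_2=d_{jk}/(d_{ij}+d_{jk})$; the burned fuel is proportional to $p_1\tau_1^{-2}+p_2\tau_2^{-2}$, and since $\tau\mapsto\tau^{-2}$ is strictly convex on $(0,\infty)$, Jensen's inequality gives $p_1\tau_1^{-2}+p_2\tau_2^{-2}\ge(p_1\tau_1+p_2\tau_2)^{-2}=\bar t^{-2}$ with equality iff $\tau_1=\tau_2$, so the perturbed solution is strictly better unless $v_{ij}^k=v_{jk}^k$. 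I expect the main obstacle to be not this convexity step but part (ii): making the feasibility bookkeeping airtight — guaranteeing enough slack in (15)--(17) to reabsorb the saved fuel and handling degenerate configurations in which a state variable already sits at a bound or interacts with $I_i^k+B_i^k\le I_{\max}^k$. A cleaner route that sidesteps full equalization is an infinitesimal shift $\tau_1\mapsto\tau_1+d_{jk}\varepsilon$, $\tau_2\mapsto\tau_2-d_{ij}\varepsilon$, which keeps the total leg-time fixed, stays feasible for small $\varepsilon$, and changes the burned fuel with derivative $-2C^kW_i^kd_{ij}d_{jk}(\tau_1^{-3}-\tau_2^{-3})$, nonzero whenever $\tau_1\neq\tau_2$; one sign of $\varepsilon$ then already gives a strict improvement, and this is the version I would actually write out.
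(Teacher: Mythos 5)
Your proposal is correct and is essentially the paper's own argument: both replace the two speeds by the unique time-preserving common speed (your $1/\bar t$ is exactly the paper's $v^*=\tfrac{(d_{ij}+d_{jk})v_{ij}v_{jk}}{d_{ij}v_{jk}+d_{jk}v_{ij}}$) and show the fuel on the two legs strictly decreases, the only difference being that the paper verifies the strict inequality by expanding and factoring $v_{jk}(v_{ij}-v_{jk})^2(2v_{ij}+v_{jk})>0$ whereas you invoke strict convexity of $\tau\mapsto\tau^{-2}$ and Jensen, which is cleaner. The feasibility bookkeeping you flag as the main obstacle in part (ii) is not handled by the paper either --- it simply asserts that equal total travel time plus lower consumption yields a feasible, better solution --- so your attempt is, if anything, slightly more careful than the original on that point.
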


\begin{proof}
If $v_{ij}\neq v_{jk}$, WLOG, assume $v_{ij}<v_{jk}$. Since the function $f(v)=(d_{ij}+d{jk})/v$ is continuous and monotonically decreasing on its domain, and
\[\frac{d_{ij}+d{jk}}{v_{jk}}<\frac{d_{ij}}{v_{ij}}+\frac{d_{jk}}{v_{jk}}<\frac{d_{ij}+d{jk}}{v_{ij}}\]
Hence, there exists a $v^*\in [v_{ij},v{jk}]$ such that
\[\frac{d_{ij}+d{jk}}{v^*}=\frac{d_{ij}}{v_{ij}}+\frac{d_{jk}}{v_{jk}}\].
The fuel consumption on the two voyages is $CW_{ij}d_{ij}v_{ij}^2+CW_{jk}d_{jk}v_{jk}^2$, with $C$ as a constant, and $W_{ij}$ as the weight of ship on voyage $ij$. Since there is no service and no bunkering happening at port $j$, the weight of ship will not change, $W_{ij}=W_{jk}=W$. Hence the fuel consumption is $CW(d_{ij}v_{ij}^2+d{jk}v_{jk}^2)$.
On the other hand, if the ship travel through the two voyage with speed $v^*$, the fuel consumption is $CW(d_{ij}+d_{jk})v^{*2}$. We want to show that the amount of fuel consumption is less with speed $v^*$, which is equal to show that
\begin{equation}\label{eqt2}
d_{ij}v_{ij}^2+d{jk}v_{jk}^2>(d_{ij}+d_{jk})v^{*2}
\end{equation}
With
\[v^*=\frac{(d_{ij}+d_{jk})v_{ij}v_{jk}}{d_{ij}v_{jk}+d_{jk}v_{ij}}\]
The inequality (\ref{eqt2}) can be derived as
\begin{eqnarray}
d_{ij}(2v_{ij}^3v_{jk}+v_{ij}^4)+d_{jk}(v_{ij}^4+2v_{ij}v_{jk}^3)\quad\quad\quad\quad\quad\nonumber\\
\quad\quad\quad>3d_{ij}v_{ij}^2v_{jk}^2+3d_{jk}v_{ij}^2v_{jk}^2\label{eqt3}
\end{eqnarray}

Since $2v_{ij}^3v_{jk}+v_{jk}^4-3v_{ij}^2v_{jk}^2=v_{jk}(v_{ij}-v_{jk})^2(2v_{ij}+v{jk})>0$, $d_{ij}(2v_{ij}^3v_{jk}+v_{ij}^4)>3d_{ij}v_{ij}^2v_{jk}^2$. Similarly, $d_{jk}(v_{ij}^4+2v_{ij}v_{jk}^3)>3d_{jk}v_{ij}^2v_{jk}^2$. Hence, the inequality (\ref{eqt3}) holds, and (\ref{eqt2}) also holds, which indicates that fuel consumption is less using one speed. Since the total travel time on the two voyages is the same according to the definition of $v^*$, $v^*$ is a feasible and better solution.
\end{proof}

Using this fact, we will show that any port without service and bunkering should not be included in voyages for maximum profit.

\begin{proposition} Suppose $d_{ij}\leq d_{ik}+d_{kj}$ holds for any set of ports. A ship visit port $j$, while there is no pickup, delivery and bunkering activities, then port $j$ should not be assigned to the ship for maximum profit.
\end{proposition}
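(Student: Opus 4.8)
The plan is to begin from an arbitrary feasible route of the ship that passes through $j$, write the relevant segment locally as $i\to j\to k$ (where $i$ is the predecessor of $j$ on the route and $k$ its successor, allowing $i=1$ or $k=N+1$), and exhibit a route that skips $j$, sails $i\to k$ directly, leaves every other decision unchanged, and is feasible with profit no smaller. By Lemma~\ref{lemma1} we may assume the cruise speed on both legs is a common value $v$. Since there is no loading, unloading or bunkering at $j$, the displacement on both legs is the same value $W$, so the fuel burnt on the two legs is $CWv^2(d_{ij}+d_{jk})$ and the time elapsed on the segment, including the processing time $s_j$ at $j$, is $(d_{ij}+d_{jk})/v+s_j$.

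For the modified route I keep every other variable fixed and choose the cruise speed $v'$ on the new leg $i\to k$ so that the arrival time at $k$ equals its original value, i.e.\ $d_{ik}/v'=(d_{ij}+d_{jk})/v+s_j$. Because of the triangle inequality $d_{ik}\le d_{ij}+d_{jk}$ assumed in the proposition and $s_j\ge0$, this forces $v'\le v$, so $v'$ is admissible; and since the arrival time at $k$, hence at every port visited after $k$, is unchanged, all time-window and scheduling constraints still hold. The served pickup and delivery quantities, the chartering term, and every bunkering amount are literally unchanged, so the revenue part of the objective is identical.

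It then remains to bound the fuel cost. The fuel burnt on the rerouted leg equals $CW(v')^2 d_{ik}=CWv^2 d_{ik}^3/\big((d_{ij}+d_{jk})+s_j v\big)^2\le CWv^2 d_{ik}^3/(d_{ij}+d_{jk})^2$, which is at most $CWv^2(d_{ij}+d_{jk})$ precisely because $d_{ik}^3\le(d_{ij}+d_{jk})^3$. Hence the fuel burnt on the new leg does not exceed the fuel burnt on the two original legs, so the residual fuel at $k$ and at every later port is at least as large as before; the safety-fuel constraint, the fuel-capacity bound, and the remaining fuel-dynamics constraints downstream therefore continue to hold, and the total bunker cost does not increase. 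Together with the previous paragraph this shows the modified route has profit at least that of the original, so an optimal solution never needs to route the ship through $j$. The only degenerate case is when the whole route is $1\to j\to N+1$: deletion then leaves the ship at the depot earning the threshold $R$, which already dominates the non-positive profit of that pointless voyage by the logic of Step~4 in Table~\ref{tabSingle}, so the claim holds there too.

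I expect the genuine work to be the feasibility accounting rather than any inequality. Because the scheduling constraints pin every arrival time exactly and allow no idle waiting, one cannot merely delete $j$ and keep the old speeds; the argument hinges on choosing $v'$ to reproduce the original arrival time at $k$ and on the fact that this $v'$ is always admissible, being no larger than $v$. Once the common-speed reduction of Lemma~\ref{lemma1} is in place, the fuel comparison is the short computation given above.
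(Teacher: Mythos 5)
Your proof is correct, and it departs from the paper's argument in one substantive way. The paper keeps the common speed $v$ from Lemma~\ref{lemma1} on the merged leg $i\to k$: with the same speed, same weight, and shorter distance, the fuel burnt drops immediately (consumption per unit distance is unchanged), and the paper simply asserts that the reduced travel time cannot violate any deadline. You instead slow the ship to the unique $v'\le v$ that reproduces the original arrival time at $k$, which makes the downstream schedule literally identical and then requires the small cubic computation $d_{ik}^3/(d_{ij}+d_{jk})^2\le d_{ij}+d_{jk}$ to close the fuel comparison. Your version buys genuine robustness: since constraints (19)--(21) fix arrival times as equalities and impose a lower window $e_i\le a_i^k$ at each port, the paper's "arrive everywhere earlier" shortcut could in principle violate an earliest-arrival bound downstream, a point the paper's proof does not address; you also correctly account for the saved processing time $s_j$ and the degenerate route $1\to j\to N+1$. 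The one thing you assume that the paper does not need is that the slower speed $v'$ is admissible --- the formulation as written places no lower bound on speed, but the case study quotes a speed interval of $(14,24)$ knots, so strictly one should either note that any speed in $[v',v]$ arriving at $k$ within its window also works, or fall back on the paper's same-speed argument when $v'$ would be too slow. This is a minor caveat, not a gap in the logic as applied to the stated model.
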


\begin{proof}
If port $j$ is visited, the voyages from $i$ to $j$, and from $j$ to $k$ is contained as a part of the optimal routing. According to Lemma \ref{lemma1}, the cruise speed on the two voyages are the same, i.e. $v_{ij}=v_{jk}=v$. We can always construct a new routing for the ship with port $i$ to port $k$ directly using speed $v$, and the other part of the cycle remains the same. With the same speed and less traveling distance, traveling time is reduced, and the deadline will not be violated, which indicates that the new routing is a feasible one.

Since no service is done, the weight of ship is not changed. With the same weight and velocity, the amount of fuel consumption is proportional to the traveling distance. Since $d_{ik}\leq d_{ij}+d_{jk}$, the fuel consumption is less in the new routing. Since visiting port $j$ does not produce revenues, the new routing will result in more profit.
\end{proof}

When we get a solution of a single ship problem for a given assignment, we can always examine whether there are ports with no service and bunkering. If such ports exist, we should delete these ports from the assignment, and re-optimize the routing for that ship. The two-stage heuristic algorithm can be improved to a three-stage algorithm, with the third step of refining solutions. This step gives an opportunity to find a better solution for assignment in each iteration, and help faster convergence to local optimum.

\section{Numerical Results}
\subsection{Effects of weight and bunkering}
In this section, a simple example will be given, in which 7 ports are to be considered. The proposed model is a very complex formulation since several factors are comprehensively considered in the optimization. The example will be used to illustrate the necessity of considering these factors by examining the effect on the optimal routing if a factor is not included. The experiment is carried out based on ports information listed in Table \ref{tabPortInfo}. The exact optimal profit obtained by the model is $1.6356\times10^6$ with routing shown in Figure \ref{BW}.

\begin{table}[h]
\caption{Ports information}%
\label{tabPortInfo}
\renewcommand{\arraystretch}{1.5}
\centering
\par%
\begin{tabular}{|p{10pt}|p{30pt}|p{26pt}|p{26pt}|p{26pt}|p{25pt}|p{18pt}|p{18pt}|}
\hline Port & Location (*100) & Delivery demand & Unit delivery revenue & Pickup demand & Unit pickup revenue & Dead line & Fuel price\\
\hline
\textbf{1} & (0,0) & N.A. & N.A. & N.A. & N.A. & 168 & 677.5\\
\textbf{2} & (-8,2) & 0 & 130 & 3600 & 127 & 120 & 629\\
\textbf{3} & (0,10) & 3000 & 158 & 2700 & 133 & 90 & 673.5\\
\textbf{4} & (-4,-6) & 2500 & 105 & 2500 & 160 & 80 & 679.5\\
\textbf{5} & (4.7,-3) & 2700 & 120 & 0 & 95 & 40 & 630\\
\textbf{6} & (-2,9) & 3200 & 132 & 0 & 120 & 120 & 635\\
\textbf{7} & (-4,2) & 0 & 90 & 3500 & 133 & 120 & 655.5\\
\hline
\end{tabular}
\end{table}

\begin{figure}
\centerline{\includegraphics[width=0.4\textwidth]{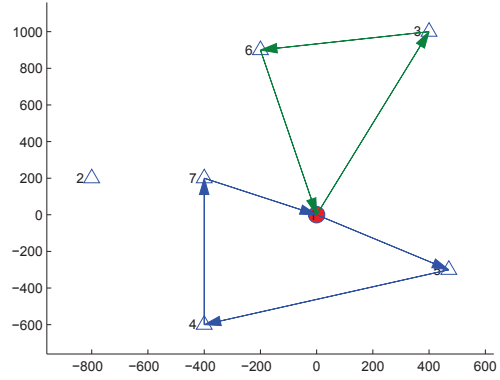}}\caption{True optimal routing}%
\label{BW}%
\end{figure}

In the first variation case, the change of weight of ships is ignored and assumed to be constant throughout their journey. Hence, the fuel consumption rate is only related to cruise speed and the effect of weight is not considered, the optimal routing is then as shown in Figure \ref{BnW}. Port 2 is serviced instead of port 5 due to high demand and relatively high revenue of port 2. However, the demand from port 2 is all pickup cargos, the higher demand indicates a voyage with large carrying weight. Furthermore, the distance between port 2 and the depot is relatively far, hence, in Figure \ref{BnW}, the increase in revenue will be largely compromised by the increase in fuel consumption because weight contributes to fuel consumption rate. The optimal solutions without considering ship weight change will give a profit of $1.486\times10^6$, which is 9.15\% lower than the optimal one in Figure \ref{BW}. Based on 100 random sample cases, ignoring the change of ship weight  results in an average decrease of 8.71\% in profit.

On the other hand, if information of fuel prices at different ports are not utilized, the model might also result in inferior solutions. In this example, if the different fuel prices are neglected, and assume that the fuel prices at all ports are set at the average level, the optimal routing will be as shown in Figure \ref{nBW}. Port 5 has a competitive fuel price but a lower revenue compared with nearby ports. Hence, visiting port 5 provides a benefit in the fuel cost. While all ports share the same fuel price, the port is not as profitable as other ports. The optimal solutions without considering different fuel prices will give a profit of $1.600\times10^6$, which is 2.17\% lower than the optimal one in Figure \ref{BW}. Based on 100 random sample cases, neglecting different fuel prices at ports results in an average decrease of 5.74\% in profit.

\begin{figure}
  \centering
  \subfigure[When weight is not considered]{
    \label{BnW} 
    \includegraphics[width=0.4\textwidth]{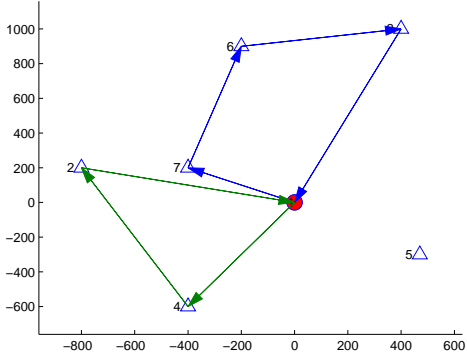}}
  \hspace{0.4in}
  \subfigure[When bunkering is not considered]{
    \label{nBW} 
    \includegraphics[width=0.4\textwidth]{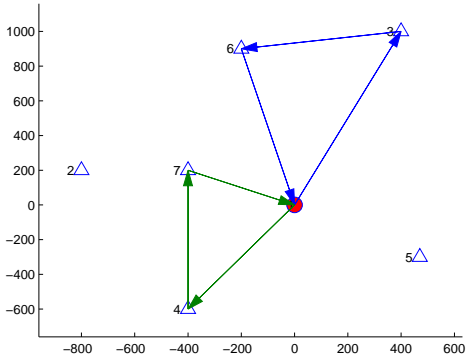}}
  \caption{Optimal routing under variate conditions}
  \label{variations} 
\end{figure}

\subsection{Computational time}
The formulation can help fleet managers make decision is aspects including choosing serving ports, routing and scheduling, as well as refueling policy. Such a comprehensive formulation leads to rapidly increasing number of variables and constraints.

The proposed two-stage heuristic, on the other hand, performs better in terms of computational time, especially when $N$ is large. Although the subproblems are also solved using CPLEX in the heuristic, a single ship problem is solved each time, which reduces the complexity to a large extent. As a matter of fact, using CPLEX may consume up the memory of computer when $N$ is large, while the heuristic algorithm can still provide answers when it occurs.

The experiment is carried out such that ports information is generated randomly, and three ways are used to solve the problem. The experiment are all run using Matlab2010b in Windows 7, on PC with CPU of i5-2400@3.1GHz, and RAM of 4GB. Since CPLEX in Matlab can only guarantee to successfully solve the problems for $N<10$, samples are generated for $N=7,8,9$, and $K=2$. Table \ref{tabCompTime} shows the average computational times used by (1) CPLEX (exact optimal solution), (2) MPAS and (3) heuristic based on neighborhood search (later referred to as NS) respectively. Table \ref{tabGap} shows the performances of the two heuristics by the optimality gaps.With the increasing of the number of ports, compared with the rapid increment in computational time to get exact optimal solutions, the growth of time for heuristics is not too steep, with reasonably close optimal solutions. Table \ref{tabGap} may also suggest that the optimality gaps tend to decrease with the increasing number of ports, which makes the heuristics more desirable. The performance in terms of computational time can be further improved if it is programmed in C++.

\begin{table}
\caption{Computational times}%
\label{tabCompTime}
\renewcommand{\arraystretch}{1.5}
\centering
\par%
\begin{tabular}{|p{18pt}|p{40pt}|p{40pt}|p{40pt}|}
\hline N & CPLEX & MPA & NS\\
\hline
7 & 37.88 & 30.02 & 40.13\\
8 & 63.78 & 39.32 & 59.23\\
9 & 1053.07 & 38.37 & 64.27\\
\hline
\end{tabular}
\end{table}

\begin{table}
\caption{Optimality gap}%
\label{tabGap}
\renewcommand{\arraystretch}{1.5}
\centering
\par%
\begin{tabular}{|p{10pt}|p{35pt}|p{35pt}|p{35pt}|p{35pt}|}
\hline N & MPA& MPA & NS  & NS \\
 &  (average) &  (max) &  (average) &  (max)\\
\hline
7 & 2.10\% & 16.70\% & 2.40\% & 16.70\%\\
8 & 0.14\% & 2.65\% & 1.08\% & 18.80\%\\
9 & 0.23\% & 0.31\% & 0.75\% & 0.33\%\\
\hline
\end{tabular}
\end{table}

Figure \ref{CompTime} shows the average computational times used by the two heuristics with respect to number of ports. With the growth of the problem size, the neighborhood of each solution is also increasing. Since the heuristic based on NS need to examine neighborhoods to find local optimum, the computational time tends to increase much faster with the growth of port number, although it performed well when facing small-size problems. The heuristic based on MPA, on the other hand, did not show a obvious rising speed in growth of computational time, and is more advantageous in terms of efficiency.

\begin{figure}
  \centering
  \subfigure[Computational times for MPA]{
    \label{MPAtime} 
    \includegraphics[width=0.4\textwidth]{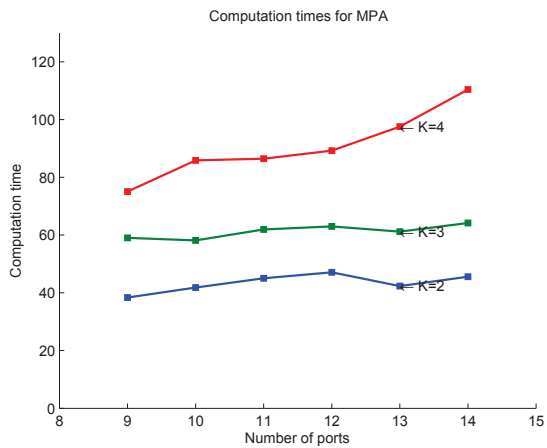}}
  \hspace{0 in}
  \subfigure[Computational times for NS]{
    \label{NStime} 
    \includegraphics[width=0.4\textwidth]{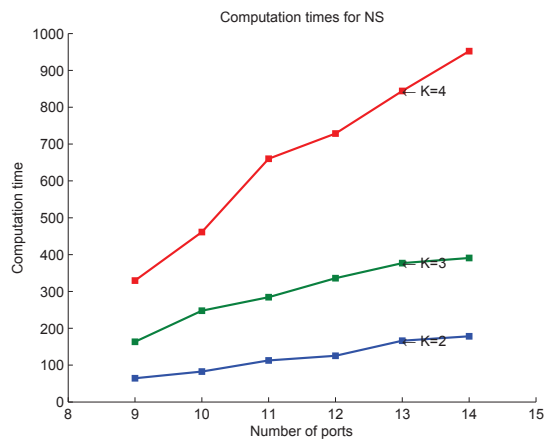}}
  \caption{Comptational times for two heuristics}
  \label{CompTime} 
\end{figure}

\subsection{Case study}
We now apply our model and solution approach to a shipping company with the depot settled in Hongkong. The company provides a set of shipping services between some ports in Asia (such as Singapore, Hong Kong, Ningbo, Qingdao, Shanghai, Xingang, Yantian, Xiamen, Kaosiung, Chiwan and Yokohama). In order to decide the routing and bunkering strategies, we collect the relevant parameters including distances between ports, demand, revenues, and fuel prices. The heuristic algorithm is then carried out to solve the problem. 14 ports and 5 ships are included in the problem, and some parameters are shown in Table \ref{tabparameter}.

\begin{table}
\caption{Parameters for case study}%
\label{tabparameter}
\renewcommand{\arraystretch}{1.5}
\centering
\par%
\begin{tabular}{p{120pt}p{70pt}}
\hline Parameter & Value\\
\hline
Number of ports of calls & 13\\
Number of ships & 5\\
Ship size & 5000 TEU\\
Ship bunker fuel capacity & 1500 ton\\
Ship speed  interval & (14,24) knots\\
Bunker fuel price discount & 10\%,20\%\\
Bunker fuel consumption parameter & $C_k=7.55\times10^{-7}$\\
\hline
\end{tabular}
\end{table}

Note that in reality, it is usually not desirable to assign many ports to one ship, due to the constraints of capacities and cycle times. Hence, when solving these real problems with our heuristic algorithms, only small sized single-ship problems are to be solved in each iteration. With 7 or 8 customer ports, a single-ship problem can be solved within seconds. Hence, our heuristic algorithm shows great efficiency when solving real problems. For the case shown, it took less than half an hour to give the final solutions.

The optimal solution for each ship is shown in Table \ref{caseResult1} - \ref{caseResult5} respectively. From the results of the study, we can draw some insights. Ships tend to do delivering services first followed by pickup activities, even when sometimes it causes geographically more traveling distances. Apart from the capacity concern, the savings on fuel consumption due to less weight of ships may outperform the cost of a longer voyage.

Although 13 ports of calls are considered in the problem, only 11 ports are actually visited if the proposed solution is implemented. Two ports are excluded from the solution due to small demands.

\begin{table}
\caption{Optimal routing and fuel strategy: Ship 1}%
\label{caseResult1}
\renewcommand{\arraystretch}{1.5}
\centering
\par%
\begin{tabular}{p{34pt}p{25pt}p{25pt}p{25pt}p{25pt}p{30pt}}
\hline Port & Delivery (TEU) & Pickup (TEU) & Velocity (knots) & Arrival time & Bunkering Amount\\
\hline
Hongkong & & & & 0 & 673\\
Yokohama & 3398 & 2755 & 14.0 & 115 & \\
Yantian & 644 & 2245 & 14.0 & 230 &\\
Hongkong & & & 14.0 & 240 & \\
\hline
\end{tabular}
\end{table}

\begin{table}
\caption{Optimal routing and fuel strategy: Ship 2}%
\renewcommand{\arraystretch}{1.5}
\centering
\par%
\begin{tabular}{p{34pt}p{25pt}p{25pt}p{25pt}p{25pt}p{30pt}}
\hline Port & Delivery (TEU) & Pickup (TEU) & Velocity (knots) & Arrival time & Bunkering Amount\\
\hline
Hongkong & & & & 0 & \\
Singapore & 4210 & & 14.6 & 100 &\\
Qingdao & 537 & & 19.6 & 224 & \\
Dalian & 213 & & 21.4 & 237 & \\
Xingang & 40 & 437 & 22.2 & 247 & \\
Shanghai & & 4204 & 24.0 & 279 & \\
Ningbo & & 359 & 17.0 & 286 & 1400\\
Hongkong & & & 14.0 & 308 & \\
\hline
\end{tabular}
\end{table}

\begin{table}
\caption{Optimal routing and fuel strategy: Ship 3}%
\renewcommand{\arraystretch}{1.5}
\centering
\par%
\begin{tabular}{p{34pt}p{25pt}p{25pt}p{25pt}p{25pt}p{30pt}}
\hline Port & Delivery (TEU) & Pickup (TEU) & Velocity (knots) & Arrival time & Bunkering Amount\\
\hline
Hongkong & & & & 0 & \\
Kaosiung & 4288 &  & 14.0 & 25 & 1011\\
Busan & 631 & & 17.0 & 79 & \\
Xiamen & 81 & 2092 & 17.0 & 132 & \\
Singapore & & 1469 & 16.7 & 230 & \\
Chiwan & & 1439 & 14.0 & 333 & \\
Hongkong & & & 14.0 & 335 & \\
\hline
\end{tabular}
\end{table}

\begin{table}
\caption{Optimal routing and fuel strategy: Ship 4}%
\renewcommand{\arraystretch}{1.5}
\centering
\par%
\begin{tabular}{p{34pt}p{25pt}p{25pt}p{25pt}p{25pt}p{30pt}}
\hline Port & Delivery (TEU) & Pickup (TEU) & Velocity (knots) & Arrival time & Bunkering Amount\\
\hline
Hongkong & & & & 0 & 605\\
Chiwan & 2506 & 1443 & 14.0 & 2 & \\
Shanghai & 2078 & 842 & 14.0 & 61 & \\
Xingang & & 437 & 14.0 & 85 & \\
Dalian & 199 & 1037 & 14.0 & 100 & \\
Qingdao & & 1241 & 14.0 & 119 & \\
Hongkong & & & 14.0 & 196 & \\
\hline
\end{tabular}
\end{table}

\begin{table}
\caption{Optimal routing and fuel strategy: Ship 5}%
\label{caseResult5}
\renewcommand{\arraystretch}{1.5}
\centering
\par%
\begin{tabular}{p{34pt}p{25pt}p{25pt}p{25pt}p{25pt}p{30pt}}
\hline Port & Delivery (TEU) & Pickup (TEU) & Velocity (knots) & Arrival time & Bunkering Amount\\
\hline
HongKong & & & & 0 & \\
Busan & 2023 & & 14.0 & 82 & \\
Yokohama & & 3392 & 14.0 & 128 & \\
Kaosiung & & 942 & 14.0 & 226 & 598 \\
Chiwan & & 666 & 14.0 & 252 & \\
Hongkong & & & 14.0 & 254 & \\
\hline
\end{tabular}
\end{table}

\section{Conclusions}
To improve financial and environmental sustainability, an integrated fleet deployment and bunker management model is developed for a shipping company providing shipping services through a deport. With the objective of maximizing total profit, the optimal solution of the model is able to provide a more profitable way of shipping service than the methods separately considering deployment and bunker management.

A more realistic fuel consumption estimation function is proposed and adopted in the integrated to further enhance the effectiveness in practice, in which only only cruise speed but freight tonnage onboard are also taken into account when estimating ship fuel consumption.

The corresponding optimization problem is NP-hard and is formulated as a Mix Integer Programming problem with nonlinear terms. Although the problem can be linearly approximated and solved using CPLEX, the complexity still leads to a very high computational time. A two-stage method with two possible generating algorithms is proposed, among which a time-efficient heuristic based on Most Promising Area is recommended. The algorithm is proven to converge to local optimum with probability 1, while the computation time deceased on a large extent. An improvement step is also proposed so that each solution in the two-stage algorithm can be examined and further optimized.

Future work is aiming at extending the current work to a stochastic planning model to prevent disruptions and delays. On-line control of a fleet is expected to be achieved in case that disruptions occur. The current literatures with stochastic models mainly address problems with uncertain demand, and most of them are analyzed based on liner shipping\cite{Ng14,Meng12}. The situation with disruptions in tramp mode should be studied.

\bibliographystyle{elsarticle-num}
\bibliography{TYY_Bib}

\end{document}